\documentclass[12pt]{article}

\setlength{\textwidth}{6.3in}
\setlength{\textheight}{8.7in}
\setlength{\topmargin}{0pt} \setlength{\headsep}{0pt}
\setlength{\headheight}{0pt} \setlength{\oddsidemargin}{0pt}
\setlength{\evensidemargin}{0pt}

\usepackage{amsmath, epsfig, cite}
\usepackage{amssymb}
\usepackage{amsfonts}
\usepackage{latexsym}
\usepackage{amsthm}

\newtheorem{thm}{Theorem}[section]

\newtheorem{lem}[thm]{Lemma}

\numberwithin{equation}{section}

\renewcommand{\thefootnote}

\begin{document}

\begin{center}
{\large\bf On two conjectural series involving Riemann zeta function
 \footnote{Corresponding author$^*$. Email addresses: weichuanan78@163.com (C. Wei),
 cexu2020@ahnu.edu.cn (C. Xu).}}
\end{center}

\renewcommand{\thefootnote}{$\dagger$}

\vskip 2mm \centerline{Chuanan Wei$^1$, Ce Xu$^{2*}$}
\begin{center}
{$^1$School of Biomedical Information and Engineering\\
  Hainan Medical University, Haikou 571199, China\\
  $^2$School of Mathematics and Statistics\\
Anhui Normal University, Wuhu 241002, China}
\end{center}

%%date: January 4, 2011
%\vskip 5mm
%\noindent {\it Suggested Running title}: Two Identities of Gould

\vskip 0.7cm \noindent{\bf Abstract.} Riemann zeta function is
important in a lot of branches of number theory. With the help of
the operator method and several transformation formulas for
hypergeometric series, we prove four series involving Riemann zeta
function. Two of them are series expansions for $\zeta(7)$ and
$\zeta(3)^2$ recently conjectured by Z.-W. Sun.

\vskip 3mm \noindent {\it Keywords}: hypergeometric series;
 harmonic number; Riemann zeta function

 \vskip 0.2cm \noindent{\it AMS
Subject Classifications:} 33D15; 05A15

\section{Introduction}

For $\ell,n\in \mathbb{Z}^{+}$, define the generalized harmonic
number of $\ell$-order by
\[H_{n}^{(\ell)}(x)=\sum_{k=1}^n\frac{1}{(x+k)^{\ell}}.\]
When $x=0$, it reduce to the
 harmonic number of $\ell$-order
\[H_{n}^{(\ell)}=\sum_{k=1}^n\frac{1}{k^{\ell}}.\]
Taking $\ell=1$ in the last expression, we obtain the classical
harmonic number:
\[
H_{n}=\sum_{k=1}^n\frac{1}{k}.\]

For a nonnegative integer $m$, define the shifted-factorial to be
\begin{align*}
(x)_0=1 \quad\text{and}\quad (x)_m=x(x+1)\cdots(x+m-1) \quad
\text{with} \quad m\in \mathbb{Z}^{+}.
\end{align*}
For two complex sequences $\{a_i\}_{i\geq1}$, $\{b_j\}_{j\geq1}$ and
a complex variable $z$, define the hypergeometric series by

$$
_{r+1}F_{r}\left[\begin{array}{c}
a_1,a_2,\ldots,a_{r+1}\\
b_1,b_2,\ldots,b_{r}
\end{array};\, z
\right] =\sum_{k=0}^{\infty}\frac{(a_1)_k(a_2)_k\cdots(a_{r+1})_k}
{(1)_k(b_1)_k\cdots(b_{r})_k}z^k.
$$
Then a long hypergeometric transformation (cf.\cite[P. 27]{Bailey})
can be stated as
\begin{align}
&{_{9}F_{8}}\left[\begin{array}{cccccccc}
  a,1+\frac{a}{2},b,c,d,e,f,g,-n\\
  \frac{a}{2},1+a-b,1+a-c,1+a-d,1+a-e,1+a-f,1+a-g,1+a+n
\end{array};1\right]
\notag\\[1mm]
&=\frac{(1+a)_n(1+\lambda-e)_n1+\lambda-f)_n1+\lambda-g)_n}{(1+\lambda)_n(1+a-e)_n(1+a-f)_n(1+a-g)_n}
\notag\\[1mm]
&\times{_{9}F_{8}}\left[\begin{array}{cccccccc}
  \lambda,1+\frac{\lambda}{2},\lambda+b-a,\lambda+c-a,\lambda+d-a,e,f,g,-n\\
  \frac{\lambda}{2},1+a-b,1+a-c,1+a-d,1+\lambda-e,1+\lambda-f,1+\lambda-g,1+\lambda+n\end{array};1
\right], \label{9F8}
\end{align}
where $\lambda=1+2a-b-c-d$ and $2+3a=b+c+d+e+f+g-n$.

 For a differentiable function $f(x)$, define the derivative operator
$\mathcal{D}_x$ by
\begin{align*}
\mathcal{D}_xf(x)=\frac{d}{dx}f(x)=\lim_{\bigtriangleup
x\to0}\frac{f(x+\bigtriangleup x)-f(x)}{\bigtriangleup x}.
 \end{align*}
 Sometimes we use the derivative operator $\mathcal{D}_x|_{x=a}$ to denote
\begin{align*}
\mathcal{D}_x|_{x=a}f(x)=\frac{d}{dx}f(x)\bigg|_{x=a}.
 \end{align*}
So it is ordinary to show that
\begin{align*}
&\mathcal{D}_x\:(1+x)_r=(1+x)_rH_r(x),
\\[1mm]
&\:\:\:\mathcal{D}_x|_{x=0}\:(1+x)_r=(1)_rH_r.
 \end{align*}
Some interesting harmonic number identities from differentiation of
the shifted-factorials can be viewed in the papers
\cite{Liu,Paule,Sofo,Wang-Wei}.

For $s\in C$ with $\mathfrak{R}(s)>1$, define the Riemann zeta
function by
$$\zeta(s)=\sum_{k=1}^{\infty}\frac{1}{k^s}.$$
For $\{k_1,\dots,k_d\}\subseteq\mathbb{Z}^{+}$, the Hoffman multiple
$t$-values (cf. \cite{Hoffman2019}) may be defined by
\begin{align*}
t(k_1,\ldots,k_d):=\sum_{\substack{0<n_1<\cdots<n_d\\ n_j:
\text{odd}}}
    \frac{1}{n_1^{k_1}\cdots n_d^{k_d}}=\sum_{\substack{0<n_1<\cdots<n_d}}
    \frac{1}{(2n_1-1)^{k_1}\cdots (2n_d-1)^{k_d}}.
\end{align*}
It is obvious that the Hoffman multiple $t$-values satisfy the
series stuffle relations (cf. \cite{Hoffman2000}). For example,
\begin{align}\label{doubletvalues}
t(k)t(\ell)=t(k,\ell)+t(\ell,k)+t(k+\ell) \quad\text{with}\quad
k,\ell\geq 2.
\end{align}

A surprising series for $\zeta(3)$ owns to Guillera \cite{Guillera}
is
\begin{align}
&\quad\sum_{k=0}^{\infty}\bigg(\frac{-1}{4}\bigg)^k\frac{(1)_k^5}{(\frac{3}{2})_k^5}(10k^2+14k+5)=\frac{7}{2}\zeta(3).
\label{Guillera}
\end{align}
Two nice $\pi$-series, which are respectively due to Zeilberger
\cite{Zeilberger} and Ramanujan \cite{Ramanujan}, can be written as
\begin{align}
&\sum_{k=0}^{\infty}\bigg(\frac{1}{64}\bigg)^k\frac{(1)_k^3}{(\frac{3}{2})_k^3}(21k+13)
=\frac{4\pi^2}{3},
 \label{Zeilberger}\\
&\:\:\sum_{k=0}^{\infty}\bigg(\frac{1}{64}\bigg)^k\frac{(\frac{1}{2})_k^3}{(1)_k^3}(42k+5)
=\frac{16}{\pi}.
 \label{Ramanujan}
\end{align}
 For some
conclusions similar to \eqref{Guillera}-\eqref{Ramanujan}, the
reader is referred to
 the papers \cite{Guo, Guillera-c, Liu-a, Sun-b, Wang, Wang-Sun}.

Recently, Sun \cite[Equations (6.12) and (6.11)]{Sun-d} proposed the
following two conjectures involving $\zeta(7)$ and $\zeta(3)^2$
associated with \eqref{Guillera}.

\begin{thm}\label{thm-a}
\begin{align}
\sum_{k=0}^{\infty}\bigg(\frac{-1}{4}\bigg)^k\frac{(1)_k^5}{(\frac{3}{2})_k^5}(10k^2+14k+5)\Big[16H_{2k+1}^{(4)}+3H_{k}^{(4)}\Big]=\frac{127\zeta(7)}{2}.
 \label{equation-wei-a}
\end{align}
\end{thm}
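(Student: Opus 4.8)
\bigskip
\noindent\textbf{Proof proposal.}
The plan is to run the operator method on a one--parameter deformation of Guillera's evaluation \eqref{Guillera}. Note first that $\tfrac{127}{2}=\tfrac{2^{7}-1}{2}$ mirrors Guillera's constant $\tfrac{7}{2}=\tfrac{2^{3}-1}{2}$, which strongly suggests that \eqref{equation-wei-a} should sit two parameter--differentiations above \eqref{Guillera}. So I would first seek a hypergeometric identity of the shape
\begin{align*}
\sum_{k=0}^{\infty}\Big(\tfrac{-1}{4}\Big)^{k}\,\Omega_{k}(x)\,Q_{k}(x)=\Phi(x),
\end{align*}
where $\Omega_{k}(x)$ is a ratio of shifted factorials and $Q_{k}(x)$ a polynomial in $k$, normalised so that at $x=0$ it collapses to \eqref{Guillera}: $\Omega_{k}(0)Q_{k}(0)=\frac{(1)_{k}^{5}}{(\frac32)_{k}^{5}}(10k^{2}+14k+5)$ and $\Phi(0)=\tfrac72\zeta(3)$. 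Such a family should come out of the long transformation \eqref{9F8} (or a non--terminating limit of it): one ties a few of the eight upper parameters to the deformation variable through the balancing constraints $\lambda=1+2a-b-c-d$ and $2+3a=b+c+d+e+f+g-n$, lets $n\to\infty$, and reads off the resulting relation between the two ${}_{9}F_{8}$'s, one of which degenerates to the Guillera--type sum while the other, together with the well--poised prefactor, furnishes $\Phi(x)$.

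With such an identity available, I would apply the operator $\mathcal{D}_{x}^{4}|_{x=0}$ (equivalently, extract the $x^{4}$--coefficient) on both sides. On the left the essential inputs are the higher analogues of the rules recalled in the Introduction, namely $\mathcal{D}_{x}^{m}|_{x=0}\log(c+x)_{k}=(-1)^{m-1}(m-1)!\sum_{j=1}^{k}(c+j-1)^{-m}$; the point is that a deformed factor of type $(\tfrac32+x)_{k}$ is exactly what injects the weight--$4$ data, since $\sum_{j=1}^{k}(2j+1)^{-4}=H_{2k+1}^{(4)}-1-\tfrac{1}{16}H_{k}^{(4)}$, whereas a deformed factor $(1+x)_{k}$ feeds only $H_{k}^{(4)}$, so the combination $16H_{2k+1}^{(4)}+3H_{k}^{(4)}$ is precisely what the chosen powers of these factors should produce. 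Expanding $\mathcal{D}_{x}^{4}$ by the Leibniz rule and a higher-order chain rule, the left side becomes a linear combination of (i) the target sum $\sum(\cdots)\big[16H_{2k+1}^{(4)}+3H_{k}^{(4)}\big]$, (ii) Guillera--type sums carrying a single $H^{(3)}$, $H^{(2)}$ or $H^{(1)}$ of the $(2k+1)$-- or $k$--flavour, and (iii) sums carrying products such as $H^{(2)}H^{(2)}$, $H^{(1)}H^{(3)}$, $(H^{(1)})^{2}H^{(2)}$ and $(H^{(1)})^{4}$.

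The families (ii) and (iii) then have to be cleared. Here the further evaluations established in the paper can be fed in as lemmas: each contaminating sum is again a Guillera--type series of lower harmonic weight and is evaluated in closed form from \eqref{Guillera} and its first two $x$--derivatives, or from companion hypergeometric transformations, with the stuffle relations \eqref{doubletvalues} for the Hoffman multiple $t$--values and standard Euler--sum reductions used to bring everything down to single zeta values. On the right, $\mathcal{D}_{x}^{4}|_{x=0}\Phi(x)$ is evaluated from the well--poised prefactor, whose logarithmic derivatives are sums of $\psi$--values and hence rational combinations of $\zeta(2),\zeta(3),\zeta(4),\dots$, plus the genuinely weight--$7$ piece contributed by the residual non--terminating series; matching the two sides and cancelling the lower--weight contributions leaves $\tfrac{127}{2}\zeta(7)$.

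I expect the main obstacle to be the fourth--order bookkeeping: taming the partition--indexed proliferation of terms produced by $\mathcal{D}_{x}^{4}$, and proving that, once the auxiliary evaluations are inserted, everything on the left except $\sum(\cdots)\big[16H_{2k+1}^{(4)}+3H_{k}^{(4)}\big]$ cancels while everything on the right except a multiple of $\zeta(7)$ cancels. The delicate point is that this double cancellation closes only for the correct deformation---hence for the precise coefficients $16$, $3$ and $\tfrac{127}{2}$---since a generic one--parameter family would leave an irreducible weight--$7$ Euler sum behind; isolating that family is itself part of the work.
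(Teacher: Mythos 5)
Your proposal is a plan rather than a proof, and the plan has two genuine gaps. First, you never exhibit the deformed identity $\sum(\tfrac{-1}{4})^k\Omega_k(x)Q_k(x)=\Phi(x)$ that everything else depends on; for the Guillera $\zeta(3)$ series the right-hand side of the relevant transformation is not a closed form $\Phi(x)$ built from gamma functions (that picture is correct for the $\pi$-series \eqref{Zeilberger}--\eqref{Ramanujan}, but not here). In the paper the right-hand side after differentiation is itself an infinite series, namely $\sum_k \frac{4}{(2k+1)^3}\{\cdots\}$ with braces containing differences such as $H_k^{(4)}(-\tfrac12)-H_k^{(4)}(\tfrac12)$, which telescope to $16-16/(2k+1)^4$ and only for that reason reduce to $\zeta(3)$ and $\zeta(7)$. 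Second, and more seriously, the operator $\mathcal{D}_x^4|_{x=0}$ applied to a generic one-parameter family produces, as you concede, sums weighted by $(H^{(1)})^4$, $H^{(1)}H^{(3)}$, $(H^{(1)})^2H^{(2)}$, etc., i.e.\ alternating Euler sums of weight up to $7$ with odd-order harmonic factors. These are not all reducible to single zeta values, and the stuffle relation \eqref{doubletvalues} plus the paper's auxiliary evaluations are nowhere near enough to clear them. Your hoped-for ``double cancellation'' is asserted, not demonstrated, and it is precisely the hard part.

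The paper's actual route is designed to avoid this contamination. It starts from Chu--Zhang's quadratic transformation \eqref{equation-wei-e} with two different substitutions, applies a \emph{first}-order derivative in a parameter, then divides by a linear factor such as $(1-2x)$ or $(c-1)$ before differentiating once more at a symmetric point. The division step converts differences $H_k(2x-1+\cdot)-H_k(1-2x+\cdot)$ into sums of products $\sum_i \frac{2}{(2x-1+i)(1-2x+i)}$, so that after the second differentiation only \emph{even}-order harmonic numbers survive: each of the two resulting identities \eqref{equation-wei-g} and \eqref{equation-wei-j} involves only $H_k^{(4)}$, $H_k^{(4)}(\tfrac12)$ and the square $\big(H_k^{(2)}(\tfrac12)-2H_k^{(2)}\big)^2$. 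A single linear combination of these two identities and \eqref{Guillera} then cancels all squared and lower-weight terms simultaneously on both sides (using $H_k^{(4)}(\tfrac12)=16H_{2k+1}^{(4)}-H_k^{(4)}-16$), leaving exactly $16H_{2k+1}^{(4)}+3H_k^{(4)}$ against $\tfrac{127}{2}\zeta(7)$. If you want to salvage your approach, the symmetrization/division trick is the missing idea: it is what keeps the harmonic-number weight even and the number of contaminating families down to two, rather than the partition-indexed proliferation a raw fourth derivative would create.
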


\begin{thm}\label{thm-b}
\begin{align}
\qquad\sum_{k=0}^{\infty}\bigg(\frac{-1}{4}\bigg)^k\frac{(1)_k^5}{(\frac{3}{2})_k^5}\bigg\{(10k^2+14k+5)\Big[8H_{2k+1}^{(3)}-H_{k}^{(3)}\Big]+\frac{2}{k+1}\bigg\}
=\frac{49\zeta(3)^2}{2}.
 \label{equation-wei-b}
\end{align}
\end{thm}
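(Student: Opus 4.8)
The plan is to obtain both \eqref{equation-wei-a} and \eqref{equation-wei-b} from one parametric identity by the operator method. First I would look for a master identity of the shape
\[
\sum_{k=0}^{\infty}\bigg(\frac{-1}{4}\bigg)^{k}\,\Omega_{k}(x)\,\bigl(10k^{2}+14k+5+x\,P_{k}(x)\bigr)=G(x),
\]
in which the summand is $(-1/4)^{k}$ times a quotient of shifted factorials in the parameters $1+x$, $\tfrac32+x$ (and possibly $\tfrac12+x$ or $1+2x$) reducing to $(1)_{k}^{5}/(\tfrac32)_{k}^{5}$ at $x=0$, and $G(x)$ is an explicit closed form built from Gamma and polygamma values whose value, or a low-order $x$-derivative of which, at $x=0$ is the right-hand side of \eqref{Guillera}. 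Such a master identity should be produced from Dougall's terminating very-well-poised ${}_{7}F_{6}$ summation or, with more room to maneuver, from the ${}_{9}F_{8}$ transformation \eqref{9F8}: one specializes its free parameters so that the very-well-poised and balance conditions hold identically in $x$, lets the terminating index $n\to\infty$, and is left with a convergent deformation of \eqref{Guillera}. Crucially, the deformation must give separate shifts to the $k!$- and $(2k+1)!$-type factors hidden inside $(1)_{k}^{5}/(\tfrac32)_{k}^{5}=4^{5k}(k!)^{10}/((2k+1)!)^{5}$, since this is exactly what makes repeated $x$-differentiation produce the two species $H_{k}^{(\ell)}$ and $H_{2k+1}^{(\ell)}$ occurring in the theorems.

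With the master identity in hand, I would apply $\mathcal{D}_{x}$ to the order needed to bring in fourth-order harmonic numbers for Theorem~\ref{thm-a} and third-order ones for Theorem~\ref{thm-b}, and then put $x=0$. On the series side the rule $\mathcal{D}_{x}(1+x)_{r}=(1+x)_{r}H_{r}(x)$ and its iterates turn the differentiated summand into a combination of generalized harmonic numbers; the precise integer weights $16,3$ (respectively $8,-1$) are forced by the multiplicities and shift constants in $\Omega_{k}(x)$, and the stray term $2/(k+1)$ in \eqref{equation-wei-b} should be the residue of differentiating the polynomial factor $10k^{2}+14k+5$ (or of a $1/(k+1+x)$-type factor) and simplifying. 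On the closed-form side one expands $G(x)$ to the required order using $\log\Gamma$ and the evaluations
\[
\psi^{(m)}(1)=(-1)^{m+1}m!\,\zeta(m+1),\qquad
\psi^{(m)}\!\left(\tfrac12\right)=(-1)^{m+1}m!\,(2^{m+1}-1)\,\zeta(m+1),
\]
together with the analogous shift for $\psi^{(m)}(\tfrac32)$. Two internal checks support this route: the constant $2^{7}-1=127$ in Theorem~\ref{thm-a} is precisely what $\psi^{(6)}(\tfrac12)$ contributes, and the relation $\tfrac{49}{2}\zeta(3)^{2}=2\bigl(\tfrac72\zeta(3)\bigr)^{2}$ signals that the weight-$6$ output of Theorem~\ref{thm-b} is a ``square'' arising from a product of two $\psi''$-type terms in the Leibniz/Bell expansion of $G(x)$.

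The main obstacle, and the bulk of the labour, is twofold. Differentiating a product of shifted factorials three or four times unleashes a proliferation of \emph{mixed} terms on the series side --- products such as $H_{k}^{(1)}H_{k}^{(3)}$, $\bigl(H_{2k+1}^{(2)}\bigr)^{2}$, and the like --- and correspondingly a combination of weight-$7$ (respectively weight-$6$) constants $\zeta(7),\zeta(2)\zeta(5),\zeta(3)\zeta(4),\ldots$ (respectively $\zeta(6),\zeta(2)\zeta(4),\zeta(3)^{2},\ldots$) on the other side. One must show that, after subtracting suitable multiples of the lower-order instances of the master identity (in particular \eqref{Guillera} itself and a companion $\pi^{2}$-series in the spirit of \eqref{Zeilberger}), every mixed series on the left either cancels or recombines into exactly $16H_{2k+1}^{(4)}+3H_{k}^{(4)}$ (respectively $8H_{2k+1}^{(3)}-H_{k}^{(3)}$ together with $2/(k+1)$), while every spurious constant on the right collapses --- via the stuffle relations \eqref{doubletvalues} for the odd multiple zeta values and the standard evaluations of multiple zeta values of weight at most $7$ --- to the single terms $\tfrac{127}{2}\zeta(7)$ and $\tfrac{49}{2}\zeta(3)^{2}$. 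I expect this identification and cancellation of spurious contributions, rather than any individual hypergeometric manipulation, to be the delicate step, and the auxiliary transformation formulas alluded to in the abstract to be used precisely to recast the summand and the intermediate series into forms in which these cancellations become transparent.
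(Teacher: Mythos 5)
You have the right high-level strategy---deform a Guillera-type identity in auxiliary parameters and differentiate---but the engine you propose for producing the closed form $\tfrac{49}{2}\zeta(3)^2$ does not exist, and that is where the plan breaks down. Your master identity is supposed to have a right-hand side $G(x)$ that is a Gamma/polygamma closed form, with $\zeta(3)^2$ emerging from a product of two $\psi''$-terms in its Taylor expansion. But the five-fold factor $(1)_k^5/(\tfrac32)_k^5$ forces the underlying object to be a nonterminating very-well-poised ${}_7F_6$ (five free numerator parameters), which admits no Gamma evaluation; Dougall's theorem only evaluates the ${}_5F_4$ or the terminating $2$-balanced ${}_7F_6$. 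This is precisely why the paper never produces such a $G$: it instead uses Chu--Zhang's acceleration formula \eqref{equation-wei-o} together with the ${}_9F_8$ transformation \eqref{9F8} to trade the ${}_7F_6$-type side for another two-parameter \emph{series}, which after applying $\mathcal{D}_c$, dividing by $d-2c$, setting $c=1$, and applying $\mathcal{D}_d|_{d=2}$ becomes the quadratic odd Euler sum $\sum_{k\ge0}\bigl(8H_{2k}^{(3)}-H_k^{(3)}\bigr)/(2k+1)^3$. The constant $\tfrac{49}{2}\zeta(3)^2$ then enters not through polygamma values but through Lemma \ref{lemm-a}: this sum equals $8t(3,3)$, and the stuffle relation \eqref{doubletvalues} gives $t(3,3)=\tfrac12\{t(3)^2-t(6)\}$ with $t(3)=\tfrac78\zeta(3)$. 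Your observation that $\tfrac{49}{2}\zeta(3)^2=2\bigl(\tfrac72\zeta(3)\bigr)^2$ signals a ``square'' is the right instinct, but the square comes from the stuffle product of Hoffman $t$-values, not from a Leibniz/Bell expansion of $\log\Gamma$.

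Even granting some parametric deformation, you leave the two decisive steps unspecified: (i) which specialization yields exactly the combination $8H_{2k+1}^{(3)}-H_{k}^{(3)}$ together with the stray $2/(k+1)$ (in the paper this requires the choice $(a,b,d,e)\to(\tfrac32,\tfrac12,d-c,3-d)$ in \eqref{equation-wei-o}, and the $2/(k+1)$ arises from the factor $(3-d+k)/(k+1)$ inside $F_k(c,d)$ upon differentiating in $d$ at $d=2$, not from the polynomial $10k^2+14k+5$); and (ii) the evaluation of the Euler sum that appears on the other side, which is the content of Lemma \ref{lemm-a} and is indispensable. Finally, a single master identity serving both Theorems \ref{thm-a} and \ref{thm-b} is unlikely: the paper proves \eqref{equation-wei-a} from a different transformation (Theorem 10 of \cite{Chu-b}, used twice with a cancellation between the two resulting right-hand sides) and \eqref{equation-wei-b} from Theorem 9 of \cite{Chu-b} plus \eqref{9F8}. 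As written, your proposal is a research plan whose pivotal object has not been shown to exist, so it does not yet constitute a proof.
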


Sun \cite[Equation (3.5)]{Sun-c} and \cite[Equation (6.2)]{Sun-d}
conjectured the following two series related to \eqref{Zeilberger}:
\begin{align*}
&\sum_{k=0}^{\infty}\bigg(\frac{1}{64}\bigg)^k\frac{(1)_k^3}{(\frac{3}{2})_k^3}(21k+13)\Big\{8H_{2k+1}^{(3)}+43H_{k}^{(3)}\Big\}
=\frac{711}{28}\zeta(3)-\frac{29}{14}\pi^2\zeta(3),\\
&\sum_{k=0}^{\infty}\bigg(\frac{1}{64}\bigg)^k\frac{(1)_k^3}{(\frac{3}{2})_k^3}\bigg\{(21k+13)H_{k}^{(3)}-\frac{1}{(k+1)^2}\bigg\}
=\frac{496}{7}\zeta(3)-\frac{128}{21}\pi^2\zeta(3).
\end{align*}
Inspired by them, we shall prove the following formula.

\begin{thm}\label{thm-c}
\begin{align}
\sum_{k=0}^{\infty}\bigg(\frac{1}{64}\bigg)^k\frac{(1)_k^3}{(\frac{3}{2})_k^3}\bigg\{(21k+13)\Big[H_{2k+1}^{(3)}+2H_{k}^{(3)}\Big]-\frac{27}{8(k+1)^2}\bigg\}
=4\pi^2\zeta(3)-36\zeta(5).
 \label{equation-wei-c}
\end{align}
\end{thm}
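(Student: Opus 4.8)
The plan is to prove \eqref{equation-wei-c} by the operator method: embed Zeilberger's formula \eqref{Zeilberger} into a one‑parameter family of hypergeometric identities, differentiate three times in the parameter, and specialize. Since Zeilberger's series is a ``divergent'' case of Dougall's very‑well‑poised summation theorem — its natural parameter values ($a=\tfrac32$ together with three unit pairs) violate the convergence condition and so call for a compensating polynomial numerator — I would pass through Bailey's transformation \eqref{9F8}: choose its terminating entry $-n$, distribute an auxiliary variable $x$ among the free parameters $a,b,c,d,e,f,g$, and then carry out the confluences (sending some parameters to infinity) that convert the argument‑$1$ series into the inverse‑central‑binomial series carrying $(1/64)^k$ and the numerator $21k+13$. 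The target of Step~1 is an identity $\sum_{k\ge0}(1/64)^k\rho_k(x)Q_k(x)=R(x)$ with $\rho_k(0)=(1)_k^3/(3/2)_k^3$, $Q_k(0)=21k+13$, and $R(x)$ an explicit product of Gamma functions, $R(0)=4\pi^2/3$; the constraint that fixes how $x$ is distributed is that the harmonic combination produced below must be exactly $H_{2k+1}^{(3)}+2H_k^{(3)}$, which is (as a consistency check) the unique combination for which the third parameter‑derivative of $R$ lands in pure weight $5$.

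The second step is to apply $\mathcal{D}_x$, $\mathcal{D}_x^{(2)}$ and $\mathcal{D}_x^{(3)}$ to both sides and set $x=0$. On the left the inputs are $\mathcal{D}_x\log(1+x)_k|_{x=0}=H_k$, $\mathcal{D}_x^{(3)}\log(1+x)_k|_{x=0}=2H_k^{(3)}$, $\mathcal{D}_x\log(\tfrac32+x)_k|_{x=0}=2H_{2k+1}-H_k-2$, $\mathcal{D}_x^{(3)}\log(\tfrac32+x)_k|_{x=0}=16H_{2k+1}^{(3)}-2H_k^{(3)}-16$, and their second‑derivative analogues. If the family of Step~1 is arranged so that the first logarithmic derivative of every summand vanishes at $x=0$, then $\mathcal{D}_x^{(3)}$ isolates the pure third logarithmic derivative, the weight‑$3$ part of the left side collapses to a constant multiple of $\sum(1/64)^k\tfrac{(1)_k^3}{(3/2)_k^3}(21k+13)[H_{2k+1}^{(3)}+2H_k^{(3)}]$, and the term $\mathcal{D}_x^{(3)}Q_k(0)$ — coming from a hidden factor of type $(1+x)_k/(2+x)_k$ inside $\rho_kQ_k$, whose $x$‑derivatives generate $(k+1)^{-j}$ — supplies the correction $-\tfrac{27}{8(k+1)^2}$. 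On the right, $\mathcal{D}_x^{(3)}R(0)$ is a linear combination of polygamma values at $1$, $\tfrac12$ (and their reflection partners), which I would reduce by $\psi''(1)=-2\zeta(3)$, $\psi^{(4)}(1)=-24\zeta(5)$, $\psi'(\tfrac12)=\tfrac{\pi^2}{2}$, $\psi''(\tfrac12)=-14\zeta(3)$, $\psi^{(4)}(\tfrac12)=-744\zeta(5)$ to obtain $4\pi^2\zeta(3)-36\zeta(5)$.

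The third step is the clean‑up. The parts of the left side that are not of weight $3$ are values of convergent hypergeometric series such as $\sum(1/64)^k\tfrac{(1)_k^3}{(3/2)_k^3}(k+1)^{-2}$ and $\sum(1/64)^k\tfrac{(1)_k^3}{(3/2)_k^3}(21k+13)H_k^{(2)}$, together with odd‑indexed sums $\sum(1/64)^k\tfrac{(1)_k^3}{(3/2)_k^3}(21k+13)O_{k+1}^{(j)}$; these must cancel against the lower‑order contributions $\mathcal{D}_x^{(1)}R(0)$ and $\mathcal{D}_x^{(2)}R(0)$. Here I would feed in \eqref{Zeilberger} itself and its known first companion, convert residual odd harmonic sums via $O_{k+1}^{(j)}=H_{2k+1}^{(j)}-2^{-j}H_k^{(j)}$, and, where double sums appear, invoke the Hoffman $t$‑value stuffle relations in the manner of \eqref{doubletvalues} (for instance $t(2)t(3)=t(2,3)+t(3,2)+t(5)$) together with the classical Euler evaluation of weight‑$5$ double values, so that everything off weight $3$ cancels and only $-\tfrac{27}{8(k+1)^2}$ remains.

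The main obstacle is Step~1: pinning down the precise parametric family — exactly how to distribute $x$ among the parameters of \eqref{9F8} and which Gamma ratios survive the limit $n\to\infty$ — so that the $x=0$ specialization is Zeilberger's divergent series with numerator $21k+13$ while $R(x)$ stays elementary. Once that is secured, Steps~2 and~3 are in principle mechanical, but the bookkeeping of Step~3 — showing that an entire tower of auxiliary hypergeometric constants cancels down to the single term $-\tfrac{27}{8(k+1)^2}$ — is delicate, and this is where sign and coefficient slips are most likely to occur.
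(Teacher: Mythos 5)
Your outline never actually constructs the object on which everything depends: the one-parameter family $\sum_k(1/64)^k\rho_k(x)Q_k(x)=R(x)$ with a closed-form Gamma-product right-hand side $R(x)$. That is not a routine detail to be deferred --- it is the proof. The paper's argument shows why a closed form is the wrong target here: what actually works is a \emph{transformation} (the accelerated form of Dougall's $_5F_4$ in \cite[Theorem 31]{Chu-b}, not Bailey's \eqref{9F8}), whose right-hand side remains a second infinite series. After the specializations $(a,b,d)=(2,1,3)$, $c=2$, $n\to\infty$, that second series degenerates to the alternating Euler sum $16\sum_{k\ge0}\frac{(-1)^k}{(k+1)^2}\big\{2H_{k+1}^{(3)}-\frac{1}{(k+1)^3}-1\big\}$, and the weight-five constants $4\pi^2\zeta(3)-36\zeta(5)$ enter through the known evaluation $\sum_{k\ge0}\frac{(-1)^k}{(k+1)^2}H_{k+1}^{(3)}=\frac{\pi^2}{8}\zeta(3)-\frac{21}{32}\zeta(5)$ of \cite{Zheng}, not through polygamma values of a Gamma ratio. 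Your plan to read the right-hand side off from $\mathcal{D}_x^{(3)}R(0)$ therefore rests on an unexhibited closed form, and nothing in the known summation theorems suggests one exists for a deformation of Zeilberger's divergent series that simultaneously meets all your constraints.

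The second structural problem is the differentiation scheme. Applying $\mathcal{D}_x^{(3)}$ in a single parameter produces, besides the wanted $H^{(3)}$ terms, the cross terms $H^{(1)}H^{(2)}$ and $(H^{(1)})^3$; your escape route is the hypothesis that ``the first logarithmic derivative of every summand vanishes at $x=0$,'' which you neither verify nor reconcile with your other requirements ($Q_k(0)=21k+13$ and the emergence of $-\frac{27}{8(k+1)^2}$). The paper avoids this issue with a two-parameter device: differentiate once in $b$, note that the resulting identity is divisible by $(c-2b)$ so that after division the first-order harmonic differences have already been converted into sums of the shape $\sum_{i}\frac{1}{(\alpha+i)(\beta+i)}$, and only then differentiate once more (in $c$) to produce third-order harmonic numbers directly, with no squares or cubes of $H^{(1)}$ left to cancel. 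Without either an explicit family for your Step~1 or a verified mechanism suppressing the lower-order logarithmic derivatives, the proposal remains a heuristic plan rather than a proof; your Step~3 clean-up via $t$-value stuffle relations is then cancellation of terms you have not shown will arise in the first place.
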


Sun \cite[Equation (3.5)]{Sun-c} and \cite[Equation (6.15)]{Sun-d}
conjectured the following two series connected to \eqref{Ramanujan}:
\begin{align*}
&\quad\:\sum_{k=0}^{\infty}\bigg(\frac{1}{64}\bigg)^k\frac{(\frac{1}{2})_k^3}{(1)_k^3}(42k+5)\bigg\{H_{2k}^{(3)}-\frac{43}{352}H_{k}^{(3)}\bigg\}
=\frac{555}{77}\frac{\zeta(3)}{\pi}-\frac{32}{11}G,
\\
&\sum_{k=0}^{\infty}\bigg(\frac{1}{64}\bigg)^k\frac{(\frac{1}{2})_k^3}{(1)_k^3}\bigg\{(42k+5)H_{k}^{(3)}-\frac{352}{(2k+1)^2}\bigg\}
=\frac{10720}{7}\frac{\zeta(3)}{\pi}-\frac{7168}{7}G,
\end{align*}
where $G$ is the Catalan constant
$$G=\sum_{k=1}^{\infty}\frac{(-1)^{k-1}}{(2k-1)^2}.$$
Inspired by them, we shall establish the following theorem.

\begin{thm}\label{thm-d}
\begin{align}
\sum_{k=0}^{\infty}\bigg(\frac{1}{64}\bigg)^k\frac{(\frac{1}{2})_k^3}{(1)_k^3}\bigg\{(42k+5)\Big[17H_{2k}^{(3)}-2H_{k}^{(3)}\Big]-\frac{27}{(2k+1)^2}\bigg\}
=\frac{240\zeta(3)}{\pi}-128G.
 \label{equation-wei-d}
\end{align}
\end{thm}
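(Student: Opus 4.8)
The plan is to prove \eqref{equation-wei-d} by the operator method, following the pattern of Theorems \ref{thm-a}--\ref{thm-c}. The starting point is a one-parameter refinement of Ramanujan's series \eqref{Ramanujan}: one introduces an auxiliary variable $x$ and seeks an identity
\begin{align*}
\Omega(x):=\sum_{k=0}^{\infty}\Big(\frac{1}{64}\Big)^{k}\,\Pi_k(x)\,\mathcal{L}_k(x)=\mathcal{R}(x),
\end{align*}
where $\Pi_k(x)$ is a product of shifted factorials with $\Pi_k(0)=(\tfrac12)_k^3/(1)_k^3$, $\mathcal{L}_k(x)$ is a rational factor with $\mathcal{L}_k(0)=42k+5$, and $\mathcal{R}(x)$ is a closed form built from Gamma functions with $\mathcal{R}(0)=16/\pi$. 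Such an identity can be produced from a terminating very-well-poised hypergeometric sum — to which a classical summation theorem and a transformation such as \eqref{9F8} apply — by letting the terminating index tend to infinity; because of the factor $1/64$ together with the half-integer parameters, the Gamma arguments in $\mathcal{R}(x)$ fall at quarter-integer points, which is what eventually brings in the Catalan constant $G$. The way $x$ is distributed among the shifted factorials of $\Pi_k$ (and among the factors of $\mathcal{L}_k$) is not arbitrary: it must be tuned so that the third-order differentiation below lands on exactly the harmonic combination appearing in \eqref{equation-wei-d}.

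With such an identity in hand, the next step is to apply $\mathcal{D}_x$, $\mathcal{D}_x^2$ and $\mathcal{D}_x^3$ to $\Omega(x)=\mathcal{R}(x)$ and then set $x=0$. On the left one uses $\mathcal{D}_x(1+x)_r=(1+x)_rH_r(x)$ and the product and quotient rules: each differentiation of $\Pi_k$ inserts into the summand a polynomial in the quantities $H_k^{(j)}$ and $H_{2k}^{(j)}$ ($j\le 3$) — the latter arising, via the Legendre duplication formula, from the simultaneous presence of half-integer and integer shifted factorials — while differentiating $\mathcal{L}_k$ contributes the rational term $1/(2k+1)^2$. One then forms the linear combination $a_0\Omega(0)+a_1\mathcal{D}_x\Omega(0)+a_2\mathcal{D}_x^2\Omega(0)+a_3\mathcal{D}_x^3\Omega(0)$ with constants $a_i$ chosen so as to annihilate all the products $H^{(1)}H^{(2)}$, the cubes $(H^{(1)})^3$, and the lower-weight contributions, so that the left-hand side becomes exactly the series in \eqref{equation-wei-d}. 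As a consistency check one may verify that this same combination equals $17$ times the first plus $\tfrac{27}{352}$ times the second of the two conjectural series of Sun recalled just before the theorem.

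Applying the same constants to the right-hand side gives $a_0\mathcal{R}(0)+a_1\mathcal{R}'(0)+a_2\mathcal{R}''(0)+a_3\mathcal{R}'''(0)$. Writing $\mathcal{R}^{(j)}(0)=\mathcal{R}(0)\cdot(\text{polynomial in }\psi,\psi',\psi''\text{ evaluated at the Gamma arguments})$ and inserting the classical values
\begin{align*}
\psi(\tfrac34)-\psi(\tfrac14)=\pi,\qquad \psi'(\tfrac14)=\pi^{2}+8G,\qquad \psi''(\tfrac14)+\psi''(\tfrac34)=-112\,\zeta(3),
\end{align*}
together with $\psi'(\tfrac12)=\tfrac{\pi^{2}}{2}$, $\psi''(\tfrac12)=-14\,\zeta(3)$, $\psi''(1)=-2\,\zeta(3)$ and $\mathcal{R}(0)=16/\pi$, and then checking that the $\gamma$-, $\log 2$- and $\pi^{2}$-contributions cancel, one arrives at $240\,\zeta(3)/\pi-128\,G$, which is the asserted value.

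The main difficulty is the first step: pinning down the correct master identity $\Omega(x)=\mathcal{R}(x)$ — that is, the precise shape of $\Pi_k(x)$ and $\mathcal{L}_k(x)$ — such that the combination of derivatives telescopes the harmonic polynomials down to precisely $(42k+5)[17H_{2k}^{(3)}-2H_k^{(3)}]-27/(2k+1)^2$ and nothing more. Once that identity and the constants $a_i$ are identified, carrying out the differentiations on the left and reducing the polygamma expression on the right is lengthy but mechanical.
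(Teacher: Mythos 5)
Your proposal is a strategy sketch rather than a proof, and its central ingredient --- the master identity $\Omega(x)=\mathcal{R}(x)$ with $\mathcal{R}(x)$ a closed Gamma-function form specializing to Ramanujan's $16/\pi$ --- is never exhibited, and the paper's actual proof suggests it does not exist in the form you need. What the paper does is quite different in structure: it returns to the differentiated identity \eqref{equation-wei-bb} (built from the Chu--Zhang transformation \eqref{equation-wei-aa}), divides by $(c-2b)$, sets $(a,b,d)=(\tfrac12,\tfrac12,\tfrac32)$, applies $\mathcal{D}_c|_{c=1}$ and lets $n\to\infty$. The outcome is \emph{not} a polygamma evaluation but an identity between two infinite series: the left side of \eqref{equation-wei-d} equals $64\sum_{k\ge0}(-1)^k(4k+1)\frac{(1/2)_k^3}{(1)_k^3}H_{2k}^{(3)}$, and the Catalan constant enters only through the separately proved evaluation $\sum_{k\ge1}(-1)^k(4k+1)\frac{(1/2)_k^3}{(1)_k^3}H_{2k}^{(3)}=\frac{15\zeta(3)}{4\pi}-2G$ of Wei and Xu. Your plan to obtain $G$ directly from $\psi'(\tfrac14)$ via derivatives of a Gamma quotient presupposes that the $(1/64)^k$ family admits a parametric closed form at quarter-integer arguments; no such identity is known or produced, and this is precisely the hard step you defer.

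There is a second gap in the differentiation scheme itself. Writing the summand as $\Pi_k(0)e^{g_k(x)}$, the third derivative produces $g_k'''+3g_k'g_k''+(g_k')^3$, i.e.\ series weighted by $(H^{(1)})^3$ and $H^{(1)}H^{(2)}$ in addition to the wanted $H^{(3)}$ terms. A linear combination $\sum_i a_i\mathcal{D}_x^i\Omega(0)$ with \emph{constant} $a_i$ cannot cancel these cross terms, because they do not occur with matching $k$-dependence in the lower-order derivatives; cancellation requires $g_k'(0)\equiv0$ for all $k$, i.e.\ an antisymmetric placement of the parameter, which you postulate (``must be tuned'') but do not establish. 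The paper avoids this by exploiting the divided-difference structure of the transformation formula: dividing by $(c-2b)$ converts differences of first-order harmonic numbers into genuinely second-order sums $\sum_i\frac{1}{(\alpha+i)(\beta+i)}$, so that a single further differentiation lands directly on weight three with no cubic or mixed residue. Finally, your consistency check against Sun's two series is numerically correct ($17$ times the first plus $\tfrac{27}{352}$ times the second does reproduce \eqref{equation-wei-d}), but those series are themselves conjectural, so this cannot carry any of the logical weight.
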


The rest of the paper is organized as follows. According to the
operator method and two transformation formulas for hypergeometric
series, we shall certify Theorems \ref{thm-a} and \ref{thm-b} in
Sections 2 and 3. Similarly, Theorems \ref{thm-c} and \ref{thm-d}
will be verified in Section 3.

%%%%%%%%%%%%%%%%%%%%%%%%%%%%%%%%%%%%%%%%%%%%%%%%%%%%%%%%%%%%%%%%%%%%%%%%%%%%%%%%%%%%%%%%%%%%%%%%%%%%%%%%%%%%%%%%%%%%%%%%%%%%%%%%%%%%%%%%%%%%%%%%%%%%%%%%%%%%%%%%%%%%%%%%%%%%%%%%%%%%%%%%%%%%%%%%%%%%%%%%%%%%%%%%%%%
\section{Proof of Theorems \ref{thm-a}}
%%%%%%%%%%%%%%%%%%%%%%%%%%%%%%%%%%%%%%%%%%%%%%%%%%%%%%%%%%%%%%%%%%%%%%%%%%%%%%%%%%%%%%%%%%%%%%%%%%%%%%%%%%%%%%%%%
Now we are ready to prove Theorem \ref{thm-a}.

\begin{proof}[Proof of Theorem \ref{thm-a}]
Recall a  transformation formula for hypergeometric series (cf.
\cite[Theorem 10]{Chu-b}):
\begin{align}
&\sum_{k=0}^{\infty}(-1)^k\frac{(1+a-b-c)_k(1+a-b-d)_{k}(1+a-b-e)_{k}(1+a-c-d)_k}{(1+a-b)_{k}(1+a-c)_{k}(1+a-d)_{k}(1+a-e)_{k}}
\notag\\
&\quad\times\frac{(1+a-c-e)_{k}(1+a-d-e)_{k}}{(1+2a-b-c-d-e)_{2k+2}}A_k(a,b,c,d,e)
\notag\\
&\:=\sum_{k=0}^{\infty}(a+2k)\frac{(b)_k(c)_k(d)_k(e)_k}{(1+a-b)_{k}(1+a-c)_{k}(1+a-d)_{k}(1+a-e)_{k}},
\label{equation-wei-e}
\end{align}
where $\mathfrak{R}(1+2a-b-c-d-e)>0$ and
\begin{align*}
A_k(a,b,c,d,e)&=(1+2a-b-c-d+2k)(2+2a-b-c-d-e+2k)(a-e+k)
\\[1mm]
&\quad+(1+a-b-c+k)(1+a-b-d+k)(1+a-c-d+k).
\end{align*}

The $(a,b,c,d,e)=(1,x,1-x,x,1-x)$ case of \eqref{equation-wei-e}
reads
\begin{align*}
\sum_{k=0}^{\infty}\bigg(\frac{-1}{4}\bigg)^k\frac{(1)_k^3(2x)_k(2-2x)_k}{(\frac{3}{2})_k(1+x)_k^2(2-x)_k^2}
B_k(x)=
2\sum_{k=0}^{\infty}(2k+1)\frac{(x)_k^2(1-x)_k^2}{(2-x)_k^2(1+x)_k^2},
\end{align*}
where
\begin{align*}
B_k(x)=2(x+k)(2-x+2k)+(1+k)(2-2x+k).
\end{align*}
Employ the derivative operator  $\mathcal{D}_{x}$ on both sides of
the last equation to get
\begin{align*}
&\sum_{k=0}^{\infty}\bigg(\frac{-1}{4}\bigg)^k\frac{(1)_k^3(2x)_k(2-2x)_k}{(\frac{3}{2})_k(1+x)_k^2(2-x)_k^2}
\notag\\[1mm]
&\quad\times
\Big\{(1-2x)+B_k(x)\big[H_{k}(2x-1)-H_{k}(1-2x)+H_{k}(1-x)-H_{k}(x)\big]
\Big\}
\notag\\[1mm]
&\:\:=
2\sum_{k=0}^{\infty}(2k+1)\frac{(x)_k^2(1-x)_k^2}{(2-x)_k^2(1+x)_k^2}\Big\{H_{k}(x-1)-H_{k}(-x)+H_{k}(1-x)-H_{k}(x)\Big\}.
\end{align*}
 Dividing both sides by $(1-2x)$, there holds
\begin{align}
&\sum_{k=0}^{\infty}\bigg(\frac{-1}{4}\bigg)^k\frac{(1)_k^3(2x)_k(2-2x)_k}{(\frac{3}{2})_k(1+x)_k^2(2-x)_k^2}
\notag\\[1mm]
&\quad\times\bigg\{1+B_k(x)\bigg[\sum_{i=1}^{k}\frac{2}{(2x-1+i)(1-2x+i)}-\sum_{i=1}^{k}\frac{1}{(x+i)(1-x+i)}\bigg]\bigg\}
\notag\\[1mm]
 &\:\:=
2\sum_{k=0}^{\infty}(2k+1)\frac{(x)_k^2(1-x)_k^2}{(2-x)_k^2(1+x)_k^2}
\notag\\[1mm]
&\quad\times
\bigg\{\sum_{i=1}^{k}\frac{2}{(x-1+i)(-x+i)}-\sum_{i=1}^{k}\frac{1}{(x+i)(1-x+i)}\bigg\}.
  \label{equation-wei-f}
\end{align}
Apply the derivative operator  $\mathcal{D}_{x}|_{x=\frac{1}{2}}$ on
both sides of \eqref{equation-wei-f} to find
\begin{align}
&\sum_{k=0}^{\infty}\bigg(\frac{-1}{4}\bigg)^k\frac{(1)_k^5}{(\frac{3}{2})_k^5}\bigg\{(10k^2+14k+5)
\Big[H_{k}^{(4)}(\tfrac{1}{2})-8H_{k}^{(4)}+2\Big(H_{k}^{(2)}(\tfrac{1}{2})-2H_{k}^{(2)}\Big)^2\Big]
\notag\\[1mm]
&\qquad
 -8\Big(H_{k}^{(2)}(\tfrac{1}{2})-2H_{k}^{(2)}\Big) \bigg\}
 \notag\\[1mm]
&\quad
 =\sum_{k=0}^{\infty}\frac{4}{(2k+1)^3}
 \bigg\{2\Big(H_{k}^{(2)}(-\tfrac{1}{2})-H_{k}^{(2)}(\tfrac{1}{2})\Big)^2-\Big(H_{k}^{(4)}(-\tfrac{1}{2})-H_{k}^{(4)}(\tfrac{1}{2})\Big)
\bigg\}.
 \label{equation-wei-g}
\end{align}

Performing the substitutions $(a,c,d,e)\to(1,c-b,d-c,2-d)$ in
\eqref{equation-wei-e}, we have
\begin{align}
&\sum_{k=0}^{\infty}(-1)^k\frac{(b-c+d)_k(d-b)_{k}(2+b-d)_{k}(2-b+c-d)_k(c)_{k}(2-c)_{k}}{(2-b)_{k}(2+b-c)_{k}(d)_{k}(2+c-d)_{k}(1)_{2k+2}}
C_k(b,c,d)
\notag\\[1mm]
&\:=\sum_{k=0}^{\infty}(2k+1)\frac{(b)_k(c-b)_k(d-c)_k(2-d)_k}{(2-b)_{k}(2+b-c)_{k}(2+c-d)_{k}(d)_{k}},
\label{equation-wei-h}
\end{align}
where
\begin{align*}
C_k(b,c,d)&=2(1+k)(d-1+k)(3-d+2k)
\\[1mm]
&\quad+(2+b-d+k)(2-b+c-d+k)(2-c+k).
\end{align*}
Employ the derivative operator $\mathcal{D}_{b}|_{b=\frac{1}{2}}$ on
both sides of \eqref{equation-wei-h} to deduce
\begin{align*}
&\sum_{k=0}^{\infty}\bigg(\frac{-1}{4}\bigg)^k\frac{(c)_k(2-c)_k(\frac{3}{2}+c-d)_k(\frac{1}{2}-c+d)_k(d-\frac{1}{2})_k(\frac{5}{2}-d)_k}
{(\frac{5}{2}-c)_k(2+c-d)_k(d)_k(2)_k(\frac{3}{2})_k^2}
\notag\\[1mm]
&\quad\times
\Big\{\big[H_{k}(\tfrac{3}{2}-d)-H_{k}(\tfrac{1}{2}+c-d)+H_{k}(d-c-\tfrac{1}{2})-H_{k}(d-\tfrac{3}{2})+H_{k}(\tfrac{1}{2})
\notag\\[1mm]
&\qquad\quad
 -H_{k}(\tfrac{3}{2}-c)\big]C_k(\tfrac{1}{2},c,d)+(c-1)(2-c+k) \Big\}
\notag\\[1mm]
&\:\:=
2\sum_{k=0}^{\infty}\frac{(c-\frac{1}{2})_k(d-c)_k(2-d)_k}{(\frac{5}{2}-c)_k(2+c-d)_k(d)_k}
\Big\{H_{k}(-\tfrac{1}{2})-H_{k}(c-\tfrac{3}{2})+H_{k}(\tfrac{1}{2})-H_{k}(\tfrac{3}{2}-c)\Big\}.
\end{align*}
Dividing both sides by $(c-1)$, there is

\begin{align}
&\sum_{k=0}^{\infty}\bigg(\frac{-1}{4}\bigg)^k\frac{(c)_k(2-c)_k(\frac{3}{2}+c-d)_k(\frac{1}{2}-c+d)_k(d-\frac{1}{2})_k(\frac{5}{2}-d)_k}
{(\frac{5}{2}-c)_k(2+c-d)_k(d)_k(2)_k(\frac{3}{2})_k^2}
\notag\\[1mm]
&\quad\times
\bigg\{\bigg[\sum_{i=1}^{k}\frac{1}{(\frac{3}{2}-d+i)(\frac{1}{2}+c-d+i)}+\sum_{i=1}^{k}\frac{1}{(d-c-\frac{1}{2}+i)(d-\frac{3}{2}+i)}
\notag\\[1mm]
&\qquad\quad
 -\sum_{i=1}^{k}\frac{1}{(\frac{1}{2}+i)(\frac{3}{2}-c+i)}\bigg]C_k(\tfrac{1}{2},c,d)+(2-c+k) \bigg\}
\notag\\[1mm]
&\:\:=
2\sum_{k=0}^{\infty}\frac{(c-\frac{1}{2})_k(d-c)_k(2-d)_k}{(\frac{5}{2}-c)_k(2+c-d)_k(d)_k}
\notag\\[1mm]
&\quad\times
\bigg\{\sum_{i=1}^{k}\frac{1}{(-\frac{1}{2}+i)(c-\frac{3}{2}+i)}-\sum_{i=1}^{k}\frac{1}{(\frac{1}{2}+i)(\frac{3}{2}-c+i)}
 \bigg\}.
 \label{equation-wei-i}
\end{align}
Applying the derivative operator $\mathcal{D}_{c}|_{c=1}$ on both
sides of \eqref{equation-wei-i} and then the derivative operator
$\mathcal{D}_{d}|_{d=\frac{3}{2}}$ on both sides of the resulting
identity, it is not difficult to understand that
\begin{align}
&\sum_{k=0}^{\infty}\bigg(\frac{-1}{4}\bigg)^k\frac{(1)_k^5}{(\frac{3}{2})_k^5}\bigg\{(10k^2+14k+5)
\Big[-6H_{k}^{(4)}+\Big(H_{k}^{(2)}(\tfrac{1}{2})-2H_{k}^{(2)}\Big)^2\Big]
\notag\\[0.5mm]
&\qquad
 -4\Big(H_{k}^{(2)}(\tfrac{1}{2})-2H_{k}^{(2)}\Big) \bigg\}
 \notag\\[0.5mm]
&\quad
 =\sum_{k=0}^{\infty}\frac{4}{(2k+1)^3}
 \Big(H_{k}^{(2)}(-\tfrac{1}{2})-H_{k}^{(2)}(\tfrac{1}{2})\Big)^2.
 \label{equation-wei-j}
\end{align}
Therefore, the linear combination of \eqref{Guillera},
\eqref{equation-wei-g} and \eqref{equation-wei-j} gives
\eqref{equation-wei-a}.
\end{proof}

%%%%%%%%%%%%%%%%%%%%%%%%%%%%%%%%%%%%%%%%%%%%%%%%%%%%%%%%%%%%%%%%%%%%%%%%%%%%%%%%%%%%%%%%%%%%%%%%%%%%%%%%%%%%%%%%%%%%%%%%%%%%%%%%%%%%%%%%%%%%%%%%%%%%%%%%%%%%%%%%%%%%%%%%%%%%%%%%%%%%%%%%%%%%%%%%%%%%%%%%%%%%%%%%%%%
\section{Proof of Theorem \ref{thm-b}}
%%%%%%%%%%%%%%%%%%%%%%%%%%%%%%%%%%%%%%%%%%%%%%%%%%%%%%%%%%%%%%%%%%%%%%%%%%%%%%%%%%%%%%%%%%%%%%%%%%%%%%%%%%%%%%%%%

In order to prove Theorem \ref{thm-b}, we require the following
lemma.

\begin{lem}\label{lemm-a}
\begin{align*}
\sum_{k=0}^\infty
\frac{8H_{2k}^{(3)}-H_k^{(3)}}{(2k+1)^3}=\frac{49}{16}\zeta(3)^2-\frac{\pi^6}{240}.
\end{align*}
\end{lem}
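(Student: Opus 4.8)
The plan is to express the target sum in terms of Hoffman multiple $t$-values and then invoke the stuffle relation \eqref{doubletvalues}. First I would rewrite the summand using the substitution $n=2k+1$ (odd positive integers), so that $\sum_{k\ge 0}\frac{1}{(2k+1)^3}=t(3)=(1-2^{-3})\zeta(3)=\frac{7}{8}\zeta(3)$, and more importantly $\sum_{k\ge 0}\frac{H_{2k}^{(3)}}{(2k+1)^3}$ and $\sum_{k\ge 0}\frac{H_k^{(3)}}{(2k+1)^3}$ should each be recognizable as combinations of double $t$-values and double zeta values. The key observation is that $H_{2k}^{(3)}=\sum_{j=1}^{2k}\frac{1}{j^3}$, and splitting the inner index $j$ into odd and even parts $j=2i-1$ and $j=2i$ gives $H_{2k}^{(3)}=\sum_{i=1}^{k}\frac{1}{(2i-1)^3}+\frac{1}{8}H_k^{(3)}$. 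Hence $8H_{2k}^{(3)}-H_k^{(3)}=8\sum_{i=1}^{k}\frac{1}{(2i-1)^3}$, and therefore
\begin{align*}
\sum_{k=0}^\infty\frac{8H_{2k}^{(3)}-H_k^{(3)}}{(2k+1)^3}
=8\sum_{k=1}^\infty\frac{1}{(2k+1)^3}\sum_{i=1}^{k}\frac{1}{(2i-1)^3}
=8\sum_{0<i\le k}\frac{1}{(2i-1)^3(2k+1)^3}=8\,t(3,3),
\end{align*}
since in the summation $0<i\le k$ the indices $2i-1<2k+1$ range over all pairs of odd integers $0<n_1<n_2$ with $n_1=2i-1$, $n_2=2k+1$.

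Next I would evaluate $t(3,3)$. By the stuffle relation \eqref{doubletvalues} with $k=\ell=3$, we have $t(3)^2=2t(3,3)+t(6)$, so $t(3,3)=\frac{1}{2}\big(t(3)^2-t(6)\big)$. Now $t(3)=\frac{7}{8}\zeta(3)$ gives $t(3)^2=\frac{49}{64}\zeta(3)^2$, and $t(6)=(1-2^{-6})\zeta(6)=\frac{63}{64}\zeta(6)=\frac{63}{64}\cdot\frac{\pi^6}{945}=\frac{\pi^6}{960}$. Therefore
\begin{align*}
8\,t(3,3)=8\cdot\frac{1}{2}\Big(\frac{49}{64}\zeta(3)^2-\frac{\pi^6}{960}\Big)
=\frac{49}{16}\zeta(3)^2-\frac{\pi^6}{240},
\end{align*}
which is exactly the claimed identity.

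The main obstacle here is essentially bookkeeping rather than any deep difficulty: one must be careful that the outer sum starts at $k=0$ (contributing a zero inner sum, so it may be dropped) and that the odd/even splitting of $H_{2k}^{(3)}$ is done correctly, including the factor $\frac{1}{8}$ from $\sum_{i=1}^k\frac{1}{(2i)^3}=\frac{1}{8}H_k^{(3)}$. One should also double-check the constant $\zeta(6)=\pi^6/945$ and the resulting arithmetic $\frac{63}{64}\cdot\frac{1}{945}=\frac{1}{960}$, and that the stuffle relation is being applied in the correct form. An alternative, if one prefers to avoid citing the stuffle relation as a black box, is to prove $t(3)^2=2t(3,3)+t(6)$ directly by expanding the product $\big(\sum_{n\text{ odd}}n^{-3}\big)^2$ and separating the diagonal $n_1=n_2$ from the two off-diagonal regions — this is a one-line computation.
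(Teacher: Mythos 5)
Your proposal is correct and follows essentially the same route as the paper: reduce $8H_{2k}^{(3)}-H_k^{(3)}$ to the odd-indexed partial sum, identify the resulting double sum as $8\,t(3,3)$, and evaluate $t(3,3)=\tfrac12\big(t(3)^2-t(6)\big)$ via the stuffle relation \eqref{doubletvalues}. Your version is in fact slightly more explicit than the paper's, which skips the odd/even splitting of $H_{2k}^{(3)}$.
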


\begin{proof}
Via the definition of Hoffman double $t$-values, we discover
\begin{align*}
\sum_{k=0}^\infty
\frac{8H_{2k}^{(3)}-H_k^{(3)}}{(2k+1)^3}=8\sum_{k=0}^\infty
\frac{1}{(2k+1)^3}\sum_{j=1}^k \frac1{(2j-1)^3}=8t(3,3).
\end{align*}
The $k=\ell=3$ case of \eqref{doubletvalues} produces
\begin{align*}
t(3,3)=\frac1{2}\big\{t(3)^2-t(6)\big\}=\frac{49}{128}\zeta(3)^2-\frac{63}{128}\zeta(6).
\end{align*}
Hence, we arrive at the desired result.
\end{proof}

Subsequently, we begin to prove Theorem \ref{thm-b}.

\begin{proof}[{\bf{Proof of Theorem \ref{thm-b}}}]
A known transformation formula for hypergeometric series (cf.
\cite[Theorem 9]{Chu-b}) can be expressed as
\begin{align}
&\sum_{k=0}^{\infty}\frac{(c)_k(d)_k(e)_k(1+a-b-c)_k(1+a-b-d)_{k}(1+a-b-e)_{k}}{(1+a-c)_{k}(1+a-d)_{k}(1+a-e)_{k}(1+2a-b-c-d-e)_{k}}
\notag\\[1mm]
&\quad\times\frac{(-1)^k}{(1+a-b)_{2k}}E_k(a,b,c,d,e) \notag
\notag\\[1mm]
&\:=\sum_{k=0}^{\infty}(a+2k)\frac{(b)_k(c)_k(d)_k(e)_k}{(1+a-b)_{k}(1+a-c)_{k}(1+a-d)_{k}(1+a-e)_{k}},
\label{equation-wei-o}
\end{align}
where  $\mathfrak{R}(1+2a-b-c-d-e)>0$ and
\begin{align*}
E_k(a,b,c,d,e)&=\frac{(1+2a-b-c-d+2k)(a-e+k)}{1+2a-b-c-d-e+k}
\\[1mm]
&\quad+\frac{(1+a-b-c+k)(1+a-b-d+k)(e+k)}{(1+a-b+2k)(1+2a-b-c-d-e+k)}.
\end{align*}

Let $(a,b,d,e)\to(\frac{3}{2},\frac{1}{2},d-c,3-d)$ in
\eqref{equation-wei-o} to obtain
\begin{align}
&\sum_{k=0}^{\infty}\bigg(\frac{-1}{4}\bigg)^k\frac{(c)_k(d-c)_k(2+c-d)_k(2-c)_{k}(d-1)_k(3-d)_{k}}
{(\frac{3}{2})_{k}^2(1)_{k}(\frac{5}{2}-c)_{k}(\frac{5}{2}+c-d)_{k}(d-\frac{1}{2})_{k}}F_k(c,d)
\notag\\[1mm]
&\:\:=\sum_{k=0}^{\infty}\frac{4k+3}{2}\frac{(\frac{1}{2})_k(c)_k(d-c)_k(3-d)_k}{(2)_{k}(\frac{5}{2}-c)_{k}(\frac{5}{2}+c-d)_{k}(d-\frac{1}{2})_{k}},
\label{equation-wei-p}
\end{align}
where
\begin{align*}
F_k(c,d)=\frac{(2d-3+2k)(7-2d+4k)}{2}+\frac{(2-c+k)(3-d+k)(2+c-d+k)}{k+1}.
\end{align*}

 The
$(a,b,d,e,f,n)\to(\frac{3}{2},d-c,3-d,\frac{1}{2},1,\infty)$ case of
\eqref{9F8} is
\begin{align}
&\sum_{k=0}^{\infty}\frac{4k+3}{2}\frac{(\frac{1}{2})_k(c)_k(d-c)_k(3-d)_k}{(2)_{k}(\frac{5}{2}-c)_{k}(\frac{5}{2}+c-d)_{k}(d-\frac{1}{2})_{k}}
\notag\\[1mm]
&\:\:
=2\sum_{k=0}^{\infty}\frac{(c-\frac{1}{2})_k(d-c-\frac{1}{2})_k(\frac{5}{2}-d)_k}{(\frac{5}{2}-c)_{k}(\frac{5}{2}+c-d)_{k}(d-\frac{1}{2})_{k}}.
\label{equation-wei-q}
\end{align}
After substituting \eqref{equation-wei-q} into
\eqref{equation-wei-p}, it is routine to see that
\begin{align}
&\sum_{k=0}^{\infty}\bigg(\frac{-1}{4}\bigg)^k\frac{(c)_k(d-c)_k(2+c-d)_k(2-c)_{k}(d-1)_k(3-d)_{k}}
{(\frac{3}{2})_{k}^2(1)_{k}(\frac{5}{2}-c)_{k}(\frac{5}{2}+c-d)_{k}(d-\frac{1}{2})_{k}}F_k(c,d)
\notag\\[1mm]
&\:\:=2\sum_{k=0}^{\infty}\frac{(c-\frac{1}{2})_k(d-c-\frac{1}{2})_k(\frac{5}{2}-d)_k}{(\frac{5}{2}-c)_{k}(\frac{5}{2}+c-d)_{k}(d-\frac{1}{2})_{k}}.
\label{equation-wei-r}
\end{align}
Employ the derivative operator $\mathcal{D}_{c}$ on both sides of
\eqref{equation-wei-r} to get
\begin{align*}
&\sum_{k=0}^{\infty}\bigg(\frac{-1}{4}\bigg)^k\frac{(c)_k(d-c)_k(2+c-d)_k(2-c)_{k}(d-1)_k(3-d)_{k}}
{(\frac{3}{2})_{k}^2(1)_{k}(\frac{5}{2}-c)_{k}(\frac{5}{2}+c-d)_{k}(d-\frac{1}{2})_{k}}
\notag\\[1mm]
&\quad\times\Big\{
\big[H_{k}(c-1)-H_{k}(d-c-1)+H_{k}(1+c-d)-H_{k}(1-c)
\notag\\[1mm]
&\qquad\quad
 +H_{k}(\tfrac{3}{2}-c)-H_{k}(\tfrac{3}{2}+c-d)\big]F_k(c,d)+\mathcal{D}_{c}F_k(c,d)\Big\}
\notag\\[1mm]
&\:\:=2\sum_{k=0}^{\infty}\frac{(c-\frac{1}{2})_k(d-c-\frac{1}{2})_k(\frac{5}{2}-d)_k}{(\frac{5}{2}-c)_{k}(\frac{5}{2}-c+d)_{k}(d-\frac{1}{2})_{k}}
\notag\\[1mm]
&\quad\times
\Big\{H_{k}(c-\tfrac{3}{2})-H_{k}(d-c-\tfrac{3}{2})+H_{k}(\tfrac{3}{2}-c)-H_{k}(\tfrac{3}{2}+c-d)\Big\}.
\end{align*}
Dividing both sides by $(d-2c)$ and  then fixing $c=1$, we are led
to
\begin{align}
&\sum_{k=0}^{\infty}\bigg(\frac{-1}{4}\bigg)^k\frac{(1)_k(d-1)_k^2(3-d)_k^2}
{(\frac{3}{2})_{k}^3(d-\frac{1}{2})_{k}(\frac{7}{2}-d)_{k}}\bigg\{\frac{3-d+k}{k+1}+\frac{10k^2+14k-2d^2+8d-3}{2}
\notag\\[1mm]
&\quad\times
\bigg[\sum_{i=1}^k\frac{1}{i(d-2+i)}+\sum_{i=1}^k\frac{1}{i(2-d+i)}
-\sum_{i=1}^k\frac{1}{(\frac{1}{2}+i)(\frac{5}{2}-d+i)}\bigg]\bigg\}
\notag\\[1mm]
&\:\:=2\sum_{k=0}^{\infty}\frac{(\frac{1}{2})_k(d-\frac{3}{2})_k(\frac{5}{2}-d)_k}{(\frac{3}{2})_{k}(\frac{7}{2}-d)_{k}(d-\frac{1}{2})_{k}}
\notag\\[1mm]
&\quad\times
\bigg\{\sum_{i=1}^k\frac{1}{(-\frac{1}{2}+i)(d-\frac{5}{2}+i)}
-\sum_{i=1}^k\frac{1}{(\frac{1}{2}+i)(\frac{5}{2}-d+i)}\bigg\}.
\label{equation-wei-s}
\end{align}
Applying the derivative operator  $\mathcal{D}_{d}|_{d=2}$ on both
sides of \eqref{equation-wei-s} and utilizing Lemma \ref{lemm-a}, we
catch hold of
\begin{align}
&\sum_{k=0}^{\infty}\bigg(\frac{-1}{4}\bigg)^k\frac{(1)_k^5}{(\frac{3}{2})_k^5}\bigg\{(10k^2+14k+5)\Big[8H_{2k+1}^{(3)}-H_{k}^{(3)}-8\Big]+\frac{2}{k+1}\bigg\}
\notag\\[1mm]
&\:\: =\sum_{k=0}^\infty
\frac{8}{(2k+1)^3}\bigg\{8H_{2k}^{(3)}-H_k^{(3)}-4+\frac{4}{(2k+1)^3}\bigg\}
\notag\\[1mm]
&\:\: = \frac{49\zeta(3)^2}{2}-28\zeta(3).
 \label{equation-wei-t}
\end{align}
So the linear combination of \eqref{Guillera} and
 \eqref{equation-wei-t} gives
\eqref{equation-wei-b}.
\end{proof}

%%%%%%%%%%%%%%%%%%%%%%%%%%%%%%%%%%%%%%%%%%%%%%%%%%%%%%%%%%%%%%%%%%%%%%%%%%%%%%%%%%%%%%%%%%%%%%%%%%%%%%%%%%%%%%%%%%%%%%%%%%%%%%%%%%%%%%%%%%%%%%%%%%%%%%%%%%%%%%%%%%%%%%%%%%%%%%%%%%%%%%%%%%%%%%%%%%%%%%%%%%%%%%%%%%%
\section{Proof of Theorems \ref{thm-c} and \ref{thm-d}}
%%%%%%%%%%%%%%%%%%%%%%%%%%%%%%%%%%%%%%%%%%%%%%%%%%%%%%%%%%%%%%%%%%%%%%%%%%%%%%%%%%%%%%%%%%%%%%%%%%%%%%%%%%%%%%%%%
Firstly, we shall prove Theorem \ref{thm-c}.

\begin{proof}[{\bf{Proof of Theorem \ref{thm-c}}}]
A known transformation formula for hypergeometric series (cf.
\cite[Theorem 31]{Chu-b}) can be stated as
\begin{align}
&\sum_{k=0}^{\infty}(-1)^k\frac{(b)_k(c)_k(d)_k(e)_k(1+a-b-c)_k(1+a-b-d)_{k}(1+a-b-e)_{k}}{(1+a-b)_{2k}(1+a-c)_{2k}(1+a-d)_{2k}(1+a-e)_{2k}}
\notag\\[1mm]
&\quad\times\frac{(1+a-c-d)_k(1+a-c-e)_{k}(1+a-d-e)_{k}}{(1+2a-b-c-d-e)_{2k}}G_k(a,b,c,d,e)
\notag\\[1mm]
&\:=\sum_{k=0}^{\infty}(a+2k)\frac{(b)_k(c)_k(d)_k(e)_k}{(1+a-b)_{k}(1+a-c)_{k}(1+a-d)_{k}(1+a-e)_{k}},
\label{equation-wei-aa}
\end{align}
where $\mathfrak{R}(1+2a-b-c-d-e)>0$ and
\begin{align*}
&G_k(a,b,c,d,e)\\[1mm]
&\:=\frac{(1+2a-b-c-d+3k)(a-e+2k)}{1+2a-b-c-d-e+2k}+\frac{(e+k)(1+a-b-c+k)}{(1+a-b+2k)(1+a-d+2k)}
\\[1mm]
&\quad\times\frac{(1+a-b-d+k)(1+a-c-d+k)(2+2a-b-d-e+3k)}{(1+2a-b-c-d-e+2k)(2+2a-b-c-d-e+2k)}
\\[1mm]
&\:+\frac{(c+k)(e+k)(1+a-b-c+k)(1+a-b-d+k)}{(1+a-b+2k)(1+a-c+2k)(1+a-d+2k)(1+a-e+2k)}
\\[1mm]
&\quad\times\frac{(1+a-b-e+k)(1+a-c-d+k)(1+a-d-e+k)}{(1+2a-b-c-d-e+2k)(2+2a-b-c-d-e+2k)}.
\end{align*}

Take $(c,d,e)\to(c-b,d-c,-n)$ in \eqref{equation-wei-aa} to
establish
\begin{align*}
&\sum_{k=0}^{n}(-1)^k\frac{(b)_k(c-b)_k(d-c)_k(1+a-c)_k(1+a+b-d)_k(1+a-b+c-d)_{k}}{(1+a-b)_{2k}(1+a+b-c)_{2k}(1+a+c-d)_{2k}}
\notag\\[1mm]
&\quad\times\frac{(1+a-b+n)_k(1+a+b-c+n)_{k}(1+a+c-d+n)_{k}(-n)_{k}}{(1+a+n)_{2k}(1+2a-d+n)_{2k}}H_k(a,b,c,d,n)
\notag\\[1mm]
&\:=\sum_{k=0}^{n}(a+2k)\frac{(b)_k(c-b)_k(d-c)_k(-n)_k}{(1+a-b)_{k}(1+a+b-c)_{k}(1+a+c-d)_{k}(1+a+n)_{k}},
\end{align*}
where
\begin{align*}
&H_k(a,b,c,d,n)\\[1mm]
&\:=\frac{(1+2a-d+3k)(a+n+2k)}{1+2a-d+n+2k}+\frac{(1+a-c+k)(1+a+b-d+k)}{(1+a-b+2k)(1+a+c-d+2k)}
\\[1mm]
&\quad\times\frac{(1+a-b+c-d+k)(2+2a-b+c-d+n+3k)(k-n)}{(1+2a-d+n+2k)(2+2a-d+n+2k)}
\\[1mm]
&\:+\frac{(c-b+k)(1+a-c+k)(1+a+b-d+k)(1+a-b+c-d+k)}{(1+a-b+2k)(1+a+b-c+2k)(1+a+c-d+2k)}
\\[1mm]
&\quad\times\frac{(1+a-b+n+k)(1+a+c-d+n+k)(k-n)}{(1+a+n+2k)(1+2a-d+n+2k)(2+2a-d+n+2k)}.
\end{align*}
Employ the derivative operator $\mathcal{D}_{b}$ on both sides of it
to find
\begin{align}
&\sum_{k=0}^{n}(-1)^k\frac{(b)_k(c-b)_k(d-c)_k(1+a-c)_k(1+a+b-d)_k(1+a-b+c-d)_{k}}{(1+a-b)_{2k}(1+a+b-c)_{2k}(1+a+c-d)_{2k}}
\notag\\[1mm]
&\quad\times\frac{(1+a-b+n)_k(1+a+b-c+n)_{k}(1+a+c-d+n)_{k}(-n)_{k}}{(1+a+n)_{2k}(1+2a-d+n)_{2k}}
\notag
\end{align}
\begin{align}
 &\quad\times
\Big\{\Big[H_{k}(b-1)-H_{k}(c-b-1)+H_{k}(a+b-d)-H_{k}(a-b+c-d)
\notag\\[1mm]
&\qquad+H_{2k}(a-b)-H_{2k}(a+b-c)+H_{k}(a+b-c+n)-H_{k}(a-b+n)\Big]
\notag\\[1mm]
&\qquad\times H_k(a,b,c,d,n)+\mathcal{D}_{b}H_k(a,b,c,d,n)
 \Big\}
\notag\\[1mm]
&\:=\sum_{k=0}^{n}(a+2k)\frac{(b)_k(c-b)_k(d-c)_k(-n)_k}{(1+a-b)_{k}(1+a+b-c)_{k}(1+a+c-d)_{k}(1+a+n)_{k}}
\notag\\[1mm]
&\quad\times
\Big\{H_{k}(b-1)-H_{k}(c-b-1)+H_{k}(a-b)-H_{k}(a+b-c)\Big\}.
\label{equation-wei-bb}
\end{align}
Dividing both sides by $(c-2b)$ and  then setting $(a,b,d)=(2,1,3)$,
there is
\begin{align*}
&\sum_{k=0}^{n}(-1)^k\frac{(1)_k^2(c-1)_k^2(3-c)_k^2(c+n)_k(4-c+n)_k(2+n)_k(-n)_k}
{(2)_{2k}(c)_{2k}(4-c)_{2k}(2+n)_{2k}(3+n)_{2k}}
\notag\\[1mm]
&\quad\times
\bigg\{\bigg[\sum_{i=1}^k\frac{2}{i(c-2+i)}-\sum_{i=1}^{2k}\frac{1}{(1+i)(3-c+i)}
+\sum_{i=1}^k\frac{1}{(1+n+i)(3-c+n+i)}\bigg]
\notag\\[1mm]
&\qquad\times P_k(c,n)+Q_k(c,n)
 \bigg\}
\notag\\[1mm]
&\:\:=2\sum_{k=0}^{n}\frac{(c-1)_k(3-c)_k(-n)_k}{(4-c)_{k}(c)_{k}(3+n)_{k}}
\bigg\{\sum_{i=1}^k\frac{1}{i(c-2+i)}
-\sum_{i=1}^k\frac{1}{(1+i)(3-c+i)}\bigg\},
\end{align*}
where
\begin{align*}
P_k(c,n)
&\:=(3k+2)+\frac{(c-1+k)(3-c+k)(2+c+3k+n)(k-n)}{2(c+2k)(2+2k+n)(3+2k+n)}
\\[1mm]
&\quad+\frac{(c-1+k)^2(3-c+k)(c+k+n)(2+k+n)(k-n)}{2(c+2k)(4-c+2k)(2+2k+n)(3+2k+n)^2},
\\[1mm]
Q_k(c,n) &\:=\frac{(3-c+k)^2(k-n)}{4(c+2k)(4-c+2k)^2}
\\[1mm]
&\quad\times\frac{(64+22c-5c^2+118k+2ck+43k^2+54n+36kn+9n^2)}{(2+2k+n)(3+2k+n)^2}.
\end{align*}
Applying the derivative operator $\mathcal{D}_{c}|_{c=2}$ on both
sides of the last equation and then letting $n\to\infty$, we have
\begin{align}
&\sum_{k=0}^{\infty}\bigg(\frac{1}{64}\bigg)^k\frac{(1)_k^3}{(\frac{3}{2})_k^3}\bigg\{(21k+13)\Big[H_{2k+1}^{(3)}+2H_{k}^{(3)}-1\Big]-\frac{27}{8(k+1)^2}\bigg\}
\notag\\[1mm]
 &\:\:=16\sum_{k=0}^{\infty}\frac{(-1)^k}{(k+1)^2}\bigg\{2H_{k+1}^{(3)}-\frac{1}{(k+1)^3}-1\bigg\}
\notag\
\end{align}
\begin{align}
 &\:\:=4\pi^2\zeta(3)-36\zeta(5)-\frac{4\pi^2}{3}.
 \label{equation-wei-cc}
\end{align}
At the last step, we have used the simple identity (cf.
\cite[Equation (4.2d)]{Zheng}):
\begin{align*}
\sum_{k=0}^{\infty}\frac{(-1)^k}{(k+1)^2}H_{k+1}^{(3)}=\frac{\pi^2}{8}\zeta(3)-\frac{21}{32}\zeta(5).
\end{align*}
Then the sum of \eqref{Zeilberger} and \eqref{equation-wei-cc}
engenders \eqref{equation-wei-c}.
\end{proof}

Secondly, we start to prove Theorem \ref{thm-d}.

\begin{proof}[{\bf{Proof of Theorem \ref{thm-d}}}]
Dividing both sides of \eqref{equation-wei-bb} by $(c-2b)$ and  then
choosing $(a,b,d)=(\frac{1}{2},\frac{1}{2},\frac{3}{2})$, there
holds
\begin{align*}
&\sum_{k=0}^{n}(-1)^k\frac{(\frac{1}{2})_k^2(c-\frac{1}{2})_k^2(\frac{3}{2}-c)_k^2(c+n)_k(2-c+n)_k(1+n)_k(-n)_k}
{(1)_{2k}(c)_{2k}(2-c)_{2k}(\frac{1}{2}+n)_{2k}(\frac{3}{2}+n)_{2k}}
\notag\\[1mm]
&\quad\times
\bigg\{\bigg[\sum_{i=1}^k\frac{2}{(-\frac{1}{2}+i)(c-\frac{3}{2}+i)}-\sum_{i=1}^{2k}\frac{1}{i(1-c+i)}
+\sum_{i=1}^k\frac{1}{(n+i)(1-c+n+i)}\bigg]
\notag\\[1mm]
&\qquad\times R_k(c,n)+S_k(c,n)
 \bigg\}
\notag\\[1mm]
&\:\:=\sum_{k=0}^{n}\frac{4k+1}{2}\frac{(\frac{1}{2})_k(c-\frac{1}{2})_k(\frac{3}{2}-c)_k(-n)_k}
{(1)_k(2-c)_{k}(c)_{k}(\frac{3}{2}+n)_{k}}
\bigg\{\sum_{i=1}^k\frac{1}{(-\frac{1}{2}+i)(c-\frac{3}{2}+i)}-\sum_{i=1}^{k}\frac{1}{i(1-c+i)}\bigg\},
\end{align*}
where
\begin{align*}
R_k(c,n)
&\:=\frac{6k+1}{2}+\frac{(2c-1+2k)(3-2c+2k)(1+c+3k+n)(k-n)}{2(c+2k)(1+4k+2n)(3+4k+2n)}
\\[1mm]
&\quad+\frac{(2c-1+2k)^2(3-2c+2k)(c+k+n)(1+k+n)(k-n)}{2(c+2k)(2-c+2k)(1+4k+2n)(3+4k+2n)^2},
\\[1mm]
S_k(c,n) &\:=\frac{(3-2c+2k)^2(k-n)}{2(c+2k)(2-c+2k)^2}
\\[1mm]
&\quad\times\frac{(16+11c-5c^2+59k+2ck+43k^2+27n+36kn+9n^2)}{(1+4k+2n)(3+4k+2n)^2}.
\end{align*}
Employing the derivative operator $\mathcal{D}_{c}|_{c=1}$ on both
sides of the above formula and then letting $n\to\infty$, we are led
to
\begin{align}
&\sum_{k=0}^{\infty}\bigg(\frac{1}{64}\bigg)^k\frac{(\frac{1}{2})_k^3}{(1)_k^3}\bigg\{(42k+5)\Big[17H_{2k}^{(3)}-2H_{k}^{(3)}\Big]-\frac{27}{(2k+1)^2}\bigg\}
\notag\\[1mm]
 &\:\:=64\sum_{k=0}^{\infty}(-1)^k(4k+1)\frac{(\frac{1}{2})_k^3}{(1)_k^3}H_{2k}^{(3)}
\notag\
\end{align}
\begin{align}
\notag &\:\:=\frac{240}{\pi}\zeta(3)-128G.
\end{align}
In the final step, we have utilized the known identity:
\begin{align*}
\sum_{k=1}^{\infty}(-1)^k(4k+1)\frac{(\frac{1}{2})_k^3}{(1)_k^3}H_{2k}^{(3)}=\frac{15\zeta(3)}{4\pi}-2G,
\end{align*}
which was conjectured by  Sun \cite[Equation (3.50)]{Sun-c} and
confirmed by Wei and Xu \cite[Theorem 1.1]{Wei-Xu}.
\end{proof}

\textbf{Acknowledgments}\\

The work is supported by  the National Natural Science Foundation of
China (No. 12071103).

%%%%%%%%%%%%%%%%%%%%%%%%%%%%%%%%%%%%%%%%%%%%%%%%%%%%%%%%%%%%%%%%%%%%%%%%%%%%%%%%%%%%%%%%%%%%%%%%%%%%%%%%%%%%%%%%%%%%%%%%%%%%%%%%%%%%%%%%%%%%%%%%%%%%%%%%%%%%%%%%%%%%%%%%%%%%%%%%%%%%%%%%%%%%%%%%%%%%%%%%%%

\end{document}